\documentclass[12pt]{amsart} 

\setlength{\topmargin}{-0.5cm}
\setlength{\textwidth}{15cm}
\setlength{\textheight}{22.6cm}
\setlength{\topmargin}{-0.25cm}
\setlength{\headheight}{1em}
\setlength{\headsep}{0.5cm}
\setlength{\oddsidemargin}{0.40cm}
\setlength{\evensidemargin}{0.40cm}

\usepackage{amscd} 
\usepackage{amsmath} 
\usepackage{amssymb} 
\usepackage{amsthm}
\usepackage{bigdelim}
\usepackage{color} 
\usepackage{enumerate}
\usepackage{mathrsfs}
\usepackage{multirow}
\usepackage[all]{xy} 
\usepackage{hyperref}
\usepackage{comment}
\newtheorem{thm}{Theorem}[section]

\newtheorem{lem}[thm]{Lemma}
\newtheorem{prob}[thm]{Problem}
\theoremstyle{definition} 
\newtheorem{defn}[thm]{Definition}

\theoremstyle{remark}
\newtheorem{rem}[thm]{Remark}

\newtheorem{ques}[thm]{Question}

\newtheorem{cl}{Claim}

\newtheorem*{ack}{Acknowledgements}

\baselineskip = 15pt
\footskip = 32pt

\title{Positivity of extensions of vector bundles}
\author{Sho Ejiri, Osamu Fujino, and Masataka Iwai}
\address{Department of Mathematics, Graduate School of Science, Osaka Metropolitan University, Osaka City, Osaka 558-8585, Japan}
\email{shoejiri.math@gmail.com}
\address{Department of Mathematics, Graduate School of Science, Kyoto University, Kyoto 606-8502, Japan}
\email{fujino@math.kyoto-u.ac.jp}
\address{Department of Mathematics, Graduate School of Science, Osaka University, Osaka 560-0043, Japan}
\email{{masataka@math.sci.osaka-u.ac.jp}, {masataka.math@gmail.com}}

\subjclass[2020]{Primary 14J60, Secondary 14E99, 14F06.}

\begin{document}
\begin{abstract}
In this paper, we study when positivity conditions of vector bundles are preserved by extension. We prove that an extension of a big (resp.~pseudo-effective) line bundle by an ample (resp.~a nef) vector bundle is big (resp.~pseudo-effective). We also show that an extension of an ample line bundle by a big line bundle is not necessarily pseudo-effective. In particular, this implies that an almost nef vector bundle is not necessarily pseudo-effective. 
\end{abstract}
\maketitle
\markboth{SHO EJIRI, OSAMU FUJINO, and MASATAKA IWAI}{Positivity of extensions of line bundles}
\section{Introduction}

Several positivity conditions defined for line bundles (e.g.~ ampleness, nefness, bigness, and pseudo-effectivity), 
which play a key role in the study of algebraic varieties, 
are naturally extended to vector bundles (see Definitions~\ref{defn:ample_nef}--\ref{defn:big}). 
The importance of such extensions is represented by the study of projective varieties whose tangent bundle satisfies such a positivity condition.
Hartshorne~\cite{Har70} conjectured that the projective spaces are characterized as projective varieties having ample tangent bundle, which was solved affirmatively by Mori~\cite{Mor79}. 
Also, the geometric structure of a projective variety with nef (resp. pseudo-effective) tangent bundle was studied in~\cite{CP91, DPS94} (resp. \cite{HMI22}). 

When we consider a property of vector bundles, it is natural to ask whether or not it is preserved by extension.
\begin{prob}
\label{prob:exact}
Consider an exact sequence of vector bundles:
$$
0\to \mathcal E' \to \mathcal E \to \mathcal E'' \to 0.
$$
If $\mathcal E'$ and $\mathcal E''$ satisfy a positivity condition $($e.g.~ ampleness, nefness, bigness, or pseudo-effectivity$)$, then does $\mathcal E$ satisfy the same?
\end{prob}
This problem is known to hold affirmatively for ampleness and nefness (cf.~\cite[\S 6]{Laz04II}). 
Using such a property of nefness, Campana and Peternell~\cite{CP91} completed the classifications of smooth projective surfaces and threefolds with nef tangent bundle.
As their study implies, an affirmative answer to Problem~\ref{prob:exact} will be useful to know whether a vector bundle satisfies a positivity condition, so the problem has been expected to be solved for bigness and pseudo-effectivity (cf.~\cite[Problem~4.3]{HMI22}, \cite[Question~2.23]{FN23}).

This paper includes two theorems. 
One of them gives an affirmative and partial answer to Problem~\ref{prob:exact} for bigness and pseudo-effectivity.
\begin{thm} \label{thm:nef-psef}
Let $X$ be a normal projective variety over an algebraically closed field. 
Let $\mathcal E$ and $\mathcal G$ be vector bundles on $X$. 
Let $\mathcal L$ be a line bundle on $X$. 
Suppose that there exists the following exact sequence:
$$
0\to \mathcal G \to \mathcal E \to \mathcal L \to 0.
$$
\begin{enumerate}[$(1)$]
\item If $\mathcal G$ is nef and $\mathcal L$ is pseudo-effective, 
then $\mathcal E$ is pseudo-effective. 
\item If $\mathcal G$ is ample and $\mathcal L$ is big, 
then $\mathcal E$ is big. 
\end{enumerate}
\end{thm}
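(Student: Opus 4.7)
Write $\pi\colon Y = \mathbb{P}(\mathcal{E}) \to X$, $\xi := c_1(\mathcal{O}_Y(1))$, and $r := \mathrm{rank}(\mathcal{E})$; the surjection $\mathcal{E}\twoheadrightarrow \mathcal{L}$ induces a canonical section $s\colon X \hookrightarrow Y$ of codimension $r-1$ with $s^*\xi = \mathcal{L}$. The goal is to show $\xi$ is pseudo-effective in $(1)$ and big in $(2)$, and the plan is induction on $r$, reducing to the $\mathbb{P}^1$-bundle case.

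\smallskip

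For the base case $r = 2$, $\mathcal{G}$ is a line bundle and $s(X)$ is a prime divisor on $Y$. The computation $\mathcal{O}_Y(s(X))|_{s(X)} = N_{s(X)/Y} = \mathcal{G}^\vee\otimes\mathcal{L}$ together with $[s(X)]\cdot[\text{fiber}] = 1$ yields $[s(X)] = \xi - \pi^*c_1(\mathcal{G})$, so
\[
\xi = [s(X)] + \pi^*c_1(\mathcal{G}).
\]
In $(1)$, nefness of $\mathcal{G}$ makes $\xi$ effective plus nef, hence pseudo-effective. In $(2)$, pick $A$ ample on $X$ with $H := \xi + \pi^*c_1(A)$ ample on $Y$; openness of the ample cone gives $\delta>0$ small with $(1-\delta)c_1(\mathcal{G}) - \delta c_1(A)$ still ample, and
\[
\xi - \delta H \;=\; (1-\delta)[s(X)] + \pi^*\!\bigl((1-\delta)c_1(\mathcal{G}) - \delta c_1(A)\bigr)
\]
is effective plus nef, hence pseudo-effective; since $H$ is ample, $\xi$ is big.

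\smallskip

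For the inductive step $r\ge 3$, set $p\colon Z := \mathbb{P}(\mathcal{G}) \to X$ and $\zeta := c_1(\mathcal{O}_Z(1))$, so $\zeta$ is nef in $(1)$ and ample in $(2)$. Let $\mathcal{G}' := \ker(p^*\mathcal{G}\twoheadrightarrow \mathcal{O}_Z(1))$ and $\mathcal{E}'' := p^*\mathcal{E}/\mathcal{G}'$, a rank-$2$ vector bundle on $Z$ fitting in
\[
0 \to \mathcal{O}_Z(1) \to \mathcal{E}'' \to p^*\mathcal{L} \to 0;
\]
the base case applied on $Z$ shows $\mathcal{E}''$ is pseudo-effective (resp.\ big). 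The surjection $p^*\mathcal{E}\twoheadrightarrow \mathcal{E}''$ embeds $\mathbb{P}(\mathcal{E}'') \hookrightarrow \mathbb{P}(p^*\mathcal{E}) = Y\times_X Z$, and the restriction $\sigma\colon \mathbb{P}(\mathcal{E}'')\to Y$ of the projection to $Y$ is a birational morphism of normal projective varieties: over $y\in Y\setminus s(X)$ corresponding to a quotient $\mathcal{E}_x\twoheadrightarrow k_y$, the composition $\mathcal{G}_x\to\mathcal{E}_x\to k_y$ is nonzero and pins down a unique $z\in Z_x$, whereas $\sigma^{-1}(s(X))$ is naturally identified with $Z$ and contracted to $s(X)$. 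Moreover $\sigma^*\xi = \mathcal{O}_{\mathbb{P}(\mathcal{E}'')}(1)$, since $\mathcal{O}_{\mathbb{P}(p^*\mathcal{E})}(1)$ pulls back from $\mathcal{O}_Y(1)$ under the projection $Y\times_X Z\to Y$. Pushing forward via $\sigma_*$ (which preserves effective and pseudo-effective classes, with $\sigma_*\mathcal{O}_{\mathbb{P}(\mathcal{E}'')} = \mathcal{O}_Y$ by Zariski's main theorem) yields pseudo-effectivity (resp.\ bigness) of $\xi$ on $Y$, completing the induction.

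\smallskip

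The main obstacle is the inductive reduction via $Z = \mathbb{P}(\mathcal{G})$: constructing $\mathcal{E}''$ carefully as a rank-$2$ quotient, verifying that the induced morphism $\sigma\colon \mathbb{P}(\mathcal{E}'')\to Y$ is birational by the fiberwise analysis sketched above, and checking the pullback identity $\sigma^*\xi = \mathcal{O}_{\mathbb{P}(\mathcal{E}'')}(1)$. Once this geometric setup is in place, the desired positivity propagates cleanly from the rank-$2$ base case through $\sigma_*$; notably, the hypotheses on $\mathcal{L}$ enter only via the existence of the section $s$, the arithmetic at each stage of the induction relying solely on the positivity of the successive \emph{sub}bundle.
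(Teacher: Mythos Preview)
Your argument establishes that $\mathcal{O}_{\mathbb{P}(\mathcal{E})}(1)$ is pseudo-effective in case~(1) and big in case~(2), but that is \emph{not} what the theorem asserts. The paper uses the Viehweg notion (Definition~\ref{defn:psef_wp} and Definition~\ref{defn:big}): $\mathcal{E}$ is pseudo-effective if $S^{\alpha\beta}(\mathcal{E})(\beta H)$ is generically globally generated for all $\alpha$ and suitable $\beta$. As the paper explicitly warns in the remark following Definition~\ref{defn:big}, the condition ``$\mathcal{O}_{\mathbb{P}(\mathcal{E})}(1)$ is pseudo-effective (resp.\ big)'' is strictly weaker. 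Your own closing observation gives the game away: you note that the hypotheses on $\mathcal{L}$ are never used beyond producing the section~$s$. But for the Viehweg notion those hypotheses are \emph{necessary}: any quotient of a Viehweg pseudo-effective (resp.\ big) bundle is again pseudo-effective (resp.\ big), so if $\mathcal{L}$ fails the hypothesis then $\mathcal{E}$ cannot satisfy the conclusion. Concretely, in rank~$2$ your formula $\xi=[s(X)]+\pi^*c_1(\mathcal{G})$ shows $\xi$ is pseudo-effective whenever $\mathcal{G}$ is nef, with no condition on $\mathcal{L}$ whatsoever; yet $\mathcal{E}$ is certainly not Viehweg pseudo-effective if $\mathcal{L}$ is, say, anti-ample.

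The paper's proof proceeds quite differently and genuinely uses the hypothesis on $\mathcal{L}$. After reducing to $X$ smooth and invoking Lemma~\ref{lem:Viehweg}, one must show $\mathcal{E}(H)$ is pseudo-effective for every ample $H$. Since $\mathcal{L}(H)$ is big, after a suitable cover (or Frobenius pullback in positive characteristic) there is an injection $\mathcal{O}_X\hookrightarrow \mathcal{L}(H)$; its preimage $\mathcal{F}\subset\mathcal{E}(H)$ sits in $0\to\mathcal{G}(H)\to\mathcal{F}\to\mathcal{O}_X\to 0$, hence is nef, and is generically isomorphic to $\mathcal{E}(H)$, which forces $\mathcal{E}(H)$ to be pseudo-effective in the Viehweg sense. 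Part~(2) is then deduced from~(1) by twisting.
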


The other theorem solves negatively Problem~\ref{prob:exact} for bigness and pseudo-effectivity, and also shows that the assumption that $\mathcal G$ is nef (resp. ample) of (1) (resp. (2)) in Theorem~\ref{thm:nef-psef} cannot be weakened to that $\mathcal G$ is pseudo-effective (resp. big). 

\begin{thm} \label{thm:negative}
Let $k$ be an algebraically closed field. 
Then there exist a smooth projective surface $S$ over $k$ 
and a vector bundle $\mathcal V$ on $S$ with the following properties:
\begin{itemize}
\item there exists an exact sequence 
$$
0\to\mathcal L \to \mathcal V \to \mathcal M \to 0
$$
such that $\mathcal L$ is a big line bundle on $S$ and $\mathcal M$ is an ample line bundle on $S$;
\item $\mathcal V$ is not pseudo-effective $($so not big$)$.
\end{itemize}
\end{thm}

Since a big line bundle is weakly positive, the above theorem also tells us 
that weak positivity is not necessarily preserved by extension. 
Here, weak positivity is a notion introduced by Viehweg~\cite{Vie82} 
(see Definition~\ref{defn:psef_wp}), 
which is a stronger condition than pseudo-effectivity.
In \cite{BKKMSU15}, these positivities were discussed by using the base loci of vector bundles  (see also \cite{FN23}).
Note that a pseudo-effective (resp.~big) vector bundle in this paper 
is said to be {\em{V-psef}} (resp.~{\em{V-big}}) in \cite[Definition~2.2]{FN23}. 

Theorem~\ref{thm:negative} also solves another problem posed by Demailly, Peternell, and Schneider~\cite[~Problem 6.6]{DPS01}. They introduced the notion of almost nefness for vector bundles (see Definition~\ref{defn:almostnef}) as a generalization of nefness, and proved that a pseudo-effective vector bundle is almost nef (\cite[~Proposition 6.5]{DPS01}), leaving the converse as a problem (\cite[~Problem 6.6]{DPS01}). I.e., they asked whether almost nefness implies pseudo-effectivity. 
This is solved negatively by Theorem~\ref{thm:negative}, because the preservation of almost nefness by extensions implies that $\mathcal V$ in the theorem is almost nef but not pseudo-effective. 

\begin{ack}
The authors thank Mihai Fulger, Shin-ichi Matsumura, Niklas M\"uller, and Xiaojun Wu very much for some useful comments and suggestions.
They also thank the referee for helpful suggestions. 
The first author was partly supported by MEXT Promotion of Distinctive Joint Research Center Program JPMXP0619217849.
The second author was partially supported by JSPS KAKENHI Grant Numbers JP19H01787, JP20H00111, JP21H00974, JP21H04994.
The third author was supported by Grant-in-Aid for Early Career Scientists JP22K13907.
\end{ack}
\section{Definitions}

In this section, we recall several definitions defined for vector bundles.
Let $k$ be an algebraically closed field of arbitrary characteristic.
A {\it variety} is an integral separated scheme of finite type over $k$.
\begin{defn} \label{defn:ample_nef}
Let $\mathcal E$ be a vector bundle on a projective variety $X$. 
Let $\pi:\mathbb P(\mathcal E) \to X$ be the projectivization of $\mathcal E$. 
Let $\mathcal O_{\mathbb P(\mathcal E)}(1)$ be the tautological line bundle. 
We say that $\mathcal E$ is \textit{ample} (resp.~\textit{nef}) if $\mathcal O_{\mathbb P(\mathcal E)}(1)$ is ample (resp.~nef). 
\end{defn}
\begin{defn} \label{defn:ggg}
Let $\mathcal G$ be a coherent sheaf on a variety $X$. 
Let $U$ be an open subset of $X$. 
We say that $\mathcal G$ is \textit{globally generated over $U$} (resp.~\textit{generically globally generated})
if the natural map 
$$
H^0(X,\mathcal G)\otimes_k \mathcal O_X \to \mathcal G
$$
is surjective over $U$ (resp.~surjective at the generic point of $X$). 
\end{defn}
\begin{defn}[\textup{\cite[Definition~1.2]{Vie82}}] \label{defn:psef_wp}
Let $\mathcal G$ be a vector bundle on 
a quasi-projective variety $X$. 
Let $U$ be an open subset of $X$. 
Let $H$ be an ample Cartier divisor on $X$. 
We say that $\mathcal G$ is \textit{weakly positive over $U$} 
(resp.~\textit{pseudo-effective}) 
if for every $\alpha\in\mathbb Z_{>0}$, there exists a $\beta\in\mathbb Z_{>0}$
such that 
$
S^{\alpha\beta}(\mathcal G) (\beta H)
$
is globally generated over $U$ (resp.~generically globally generated). 
Here, $S^{\alpha\beta}(\mathcal G)$ denotes 
the $\alpha\beta$-th symmetric product of $\mathcal G$. 
We say that $\mathcal G$ is \textit{weakly positive} 
if $\mathcal G$ is weakly positive over an open subset of $X$. 
\end{defn}
We further assume that $X$ is projective in Definition \ref{defn:psef_wp}.
In this case, we can easily check that $\mathcal G$ is nef if and only if $\mathcal G$ is weakly positive over $X$.
\begin{defn}[\textup{\cite[Notation~(vii)]{Kol87}}] \label{defn:big}
Let $\mathcal G$ be a vector bundle on 
a quasi-projective variety $X$. 
Let $H$ be an ample Cartier divisor on $X$. 
We say that $\mathcal G$ is \textit{big} if there exists an 
$\alpha \in\mathbb Z_{>0}$ such that 
$
S^\alpha (\mathcal G) (-H)
$
is pseudo-effective. 
\end{defn}
By \cite[Lemma~2.14]{Vie95}, Definitions~\ref{defn:psef_wp} and~\ref{defn:big} are independent of the choice of ample Cartier divisor $H$. 
Note that, for a generically surjective morphism $\mathcal E \to \mathcal F$ between vector bundles, if $\mathcal E$ is pseudo-effective (resp.~weakly positive, big), then so is $\mathcal F$.
\begin{rem}
The terminology ``pseudo-effective'' (resp.~``big'') is often used 
in a different meaning. 
For example, in other papers, a vector bundle $\mathcal E$ on 
a projective variety $X$ is said to be pseudo-effective (resp.~big) if 
$\mathcal{O}_{\mathbb{P}(\mathcal E) }(1)$ is pseudo-effective (resp.~big). 
This is weaker than the pseudo-effectivity (resp.~bigness) in this paper. 
\end{rem}
\begin{rem}
To the best knowledge of the authors, the notion of 
{\em{weakly positive sheaves}} 
was first introduced by Viehweg in \cite{Vie82} 
(see \cite[Definition~1.2]{Vie82}) and that of 
{\em{big sheaves}} originates from 
\cite{Vie83II} (see \cite[Lemma~3.6]{Vie83II}). 
We note that the definition of weak positivity in 
\cite{Vie83II} is different from the one in \cite{Vie82} 
(see also \cite[Definition~1.2]{Vie83}) 
and coincides with that of {\em{pseudo-effectivity}}.  
\end{rem}
\begin{defn}
\label{defn:almostnef}
Let $\mathcal E$ be a vector bundle  on a projective variety $X$.
We say that $\mathcal E$ is \textit{almost nef} if there exists a countable family $A_i$ 
of proper subvarieties of $X$ such that $\mathcal E|_C$ is nef for all curves 
$C \not\subset \bigcup_i A_i$. 
\end{defn}
\section{Proof of Theorem~\ref{thm:nef-psef}}
Before starting the proof of Theorem~\ref{thm:nef-psef}, 
we recall the following lemma.
\begin{lem}[\textup{\cite[Lemma~2.15]{Vie95}}] \label{lem:Viehweg}
Let $X$ be a smooth projective variety.
Let $\mathcal E$ be a vector bundle on $X$. 
Then $\mathcal E$ is pseudo-effective if and only if for every finite surjective morphism $\pi:X'\to X$ from a smooth projective variety $X'$ and for every ample divisor $H'$ on $X'$, the vector bundle $\pi^*\mathcal E(H')$ is pseudo-effective. 
\end{lem}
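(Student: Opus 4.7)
The plan is to prove the two implications separately. The forward implication uses only formal properties of generic global generation and of pullback; the reverse implication hinges on a Kawamata covering with prescribed divisibility in the Picard group together with descent of generic global generation along a separable finite cover via the trace splitting of $\pi_{n*}\mathcal O_{X'_{n}}$.

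For the ``only if'' direction, suppose $\mathcal E$ is pseudo-effective on $X$, and let $\pi\colon X'\to X$ and ample $H'$ on $X'$ be given. Fix an ample $H$ on $X$. For each $\alpha\in\mathbb Z_{>0}$ choose $\beta$ with $S^{\alpha\beta}\mathcal E(\beta H)$ generically globally generated; since the generic point of $X'$ lies over the generic point of $X$ and $K(X')/K(X)$ is a finite extension, pullback preserves surjectivity at the generic point, hence $S^{\alpha\beta}(\pi^{*}\mathcal E)(\beta\pi^{*}H)$ is generically globally generated on $X'$. So $\pi^{*}\mathcal E$ is pseudo-effective with respect to $\pi^{*}H$, and by \cite[Lemma~2.14]{Vie95} with respect to $H'$. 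Replacing $H'$ by a very ample multiple if necessary, the identity $S^{\alpha\beta}(\pi^{*}\mathcal E(H'))(\beta H')=S^{\alpha\beta}(\pi^{*}\mathcal E)(\beta H')\otimes\mathcal O_{X'}(\alpha\beta H')$ together with global generation of $\mathcal O_{X'}(\alpha\beta H')$ for $\beta$ large propagates generic global generation to $\pi^{*}\mathcal E(H')$.

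For the ``if'' direction, fix a very ample $H$ on $X$ and $\alpha_{0}\in\mathbb Z_{>0}$; we produce $\beta_{0}$ with $S^{\alpha_{0}\beta_{0}}\mathcal E(\beta_{0}H)$ generically globally generated. Choose an integer $n\geq\alpha_{0}+1$ coprime to $\mathrm{char}(k)$. By the Kawamata covering trick (cf.~\cite[Theorem~4.1.10]{Laz04I}), there exist a smooth projective variety $X'_{n}$, a separable finite surjective morphism $\pi_{n}\colon X'_{n}\to X$, and an ample Cartier divisor $A'_{n}$ on $X'_{n}$ with $\pi_{n}^{*}H\sim nA'_{n}$. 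By hypothesis $\pi_{n}^{*}\mathcal E(A'_{n})$ is pseudo-effective, so for the parameter $\alpha_{0}$ there exists $\beta\in\mathbb Z_{>0}$ with $S^{\alpha_{0}\beta}(\pi_{n}^{*}\mathcal E)\otimes\mathcal O_{X'_{n}}((\alpha_{0}+1)\beta A'_{n})$ generically globally generated on $X'_{n}$. After replacing $\beta$ by a multiple (allowed since symmetric-power multiplication maps are surjective and preserve generic global generation), we may assume $n\mid(\alpha_{0}+1)\beta$; then this sheaf coincides with $\pi_{n}^{*}\bigl(S^{\alpha_{0}\beta}\mathcal E(\gamma H)\bigr)$, where $\gamma:=(\alpha_{0}+1)\beta/n\leq\beta$ by the choice $n\geq\alpha_{0}+1$.

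To descend, separability of $\pi_{n}$ of degree prime to $\mathrm{char}(k)$ makes the normalized trace split the inclusion $\mathcal O_{X}\hookrightarrow\pi_{n*}\mathcal O_{X'_{n}}$, so $S^{\alpha_{0}\beta}\mathcal E(\gamma H)$ is a direct summand of $\pi_{n*}\pi_{n}^{*}\bigl(S^{\alpha_{0}\beta}\mathcal E(\gamma H)\bigr)=S^{\alpha_{0}\beta}\mathcal E(\gamma H)\otimes\pi_{n*}\mathcal O_{X'_{n}}$. The latter sheaf is generically globally generated on $X$, because its global sections equal those of $\pi_{n}^{*}\bigl(S^{\alpha_{0}\beta}\mathcal E(\gamma H)\bigr)$ on $X'_{n}$ and spanning at the generic point of $X'_{n}$ descends to spanning at the generic point of $X$; projecting sections through the trace splitting shows that the direct summand $S^{\alpha_{0}\beta}\mathcal E(\gamma H)$ is itself generically globally generated on $X$. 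Finally, $S^{\alpha_{0}\beta}\mathcal E(\beta H)=S^{\alpha_{0}\beta}\mathcal E(\gamma H)\otimes\mathcal O_{X}((\beta-\gamma)H)$, and $(\beta-\gamma)H$ is globally generated since $H$ is very ample and $\beta-\gamma\geq 0$, yielding generic global generation with $\beta_{0}=\beta$. The main obstacle is coordinating the numerical choices: picking $n$ simultaneously coprime to $\mathrm{char}(k)$ and at least $\alpha_{0}+1$, rescaling $\beta$ to make the twist integral and to force $\gamma\leq\beta$, and ensuring the Kawamata cover is available with $X'_{n}$ smooth and $\pi_{n}$ separable so that the trace-split descent applies.
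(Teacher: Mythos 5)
Your overall strategy (Bloch--Gieseker cover to take an $n$-th root of $H$, then descend along the trace splitting) is the right one, but the descent step contains a genuine error. You assert that $S^{\alpha_0\beta}\mathcal E(\gamma H)\otimes\pi_{n*}\mathcal O_{X'_n}$ is generically globally generated on $X$ because its global sections are those of $\pi_n^*\bigl(S^{\alpha_0\beta}\mathcal E(\gamma H)\bigr)$ and ``spanning at the generic point of $X'_n$ descends to spanning at the generic point of $X$.'' This is false: at the generic point the pushforward has stalk $\mathcal F_\eta\otimes_{K(X)}K(X'_n)$, whose dimension over $K(X)$ is $\deg(\pi_n)$ times larger, and a set of sections that spans over $K(X'_n)$ need not span over $K(X)$ (already $\mathcal F=\mathcal O_X$ fails: $H^0(X,\pi_{n*}\mathcal O_{X'_n})=k$ generates only a rank-one subsheaf of $\pi_{n*}\mathcal O_{X'_n}$). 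Worse, the conclusion you want from this step --- that generic global generation of $\pi^*\mathcal F$ implies generic global generation of $\mathcal F$ via the trace projection --- is itself false: take $X=E$ an elliptic curve, $L$ a nontrivial $2$-torsion line bundle, and $\pi\colon E'\to E$ the degree-$2$ isogeny with $\pi^*L\cong\mathcal O_{E'}$ (so $\pi^*L$ is globally generated, $\deg\pi$ prime to the characteristic, and the trace splitting exists); here $H^0(E,L)=0$, and indeed $\pi_*\pi^*L\cong L\oplus\mathcal O_E$ with all sections lying in the summand killed by the normalized trace. So the step is not merely unjustified; its conclusion does not hold.

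The standard repair is to pay a fixed ample twist: choose $N$ (depending only on $\pi_n$ and $H$) so that $\pi_{n*}\mathcal O_{X'_n}\otimes\mathcal O_X(NH)$ is globally generated. Then for any sheaf $\mathcal M$ generically globally generated on $X'_n$, the generically surjective map $\bigoplus\pi_{n*}\mathcal O_{X'_n}\to\pi_{n*}\mathcal M$ twisted by $NH$ shows that $\pi_{n*}\mathcal M\otimes\mathcal O_X(NH)$ is generically globally generated on $X$, and hence so is its trace-split direct summand; applied to $\mathcal M=\pi_n^*\bigl(S^{\alpha_0\beta}\mathcal E(\gamma H)\bigr)$ this yields generic global generation of $S^{\alpha_0\beta}\mathcal E\bigl((\gamma+N)H\bigr)$, not of $S^{\alpha_0\beta}\mathcal E(\gamma H)$. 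To absorb the extra $NH$ into $\beta H$ you need $\gamma+N\le\beta$, which forces the strict inequality $n\ge\alpha_0+2$ (so that $\gamma\le\beta(\alpha_0+1)/n$ leaves a margin growing linearly in $\beta$) together with $\beta\gg0$; your choice $n\ge\alpha_0+1$ only gives $\gamma\le\beta$ with no room for $N$. With these two corrections the argument goes through. (The ``only if'' direction is essentially fine, though ``replacing $H'$ by a very ample multiple'' should be rephrased: one shows directly that $S^{\alpha\beta}(\pi^*\mathcal E)\bigl((\alpha+1)\beta H'\bigr)$ is generically globally generated by taking $\beta$ divisible enough that $\mathcal O_{X'}(\alpha\beta H')$ is globally generated. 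Note also that the paper itself does not prove this lemma but quotes it from Viehweg, so there is no in-paper proof to compare against.)
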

\begin{proof}[Proof of Theorem~\ref{thm:nef-psef}.]
First, we prove (1). 
When $\mathrm{char}(k)=0$ (resp.~$\mathrm{char}(k)>0$), 
we take a resolution of singularities (resp.~a smooth 
alteration constructed in \cite[4.1.~Theorem]{deJ96}). 
Then, by \cite[Lemma~2.4~(2)]{EG19}, we may assume that 
$X$ is smooth. 
We replace $X$ with any finite cover of $X$.  
By Lemma \ref{lem:Viehweg}, it is enough to show that 
$\mathcal E(H)$ is pseudo-effective for every ample divisor $H$ on $X$. 
Note that $\mathcal L(H)$ is big. 
When $\mathrm{char}(k)=0$, by taking a resolution of 
a suitable cyclic cover 
and using \cite[Lemma~2.4~(2)]{EG19}, 
we may assume that there is an injective 
morphism $\mathcal O_X \hookrightarrow \mathcal L(H)$. 
When $\mathrm{char}(k)>0$, by using the Frobenius morphism and 
\cite[Lemma~2.4~(2)]{EG19},  
we can replace $\mathcal L(H)$ by $\mathcal L(H)^{p^e}$ and 
obtain an injective morphism $\mathcal O_X \hookrightarrow \mathcal L(H)$. 
Let $\mathcal F$ be the inverse image of $\mathcal O_X$ by 
$\mathcal E(H) \twoheadrightarrow \mathcal L(H)$. 
Then we have the morphism between exact sequences
$$
\xymatrix{
0 \ar[r] & \mathcal G(H) \ar[r] \ar@{=}[d] & \mathcal F \ar[r] \ar@{^(->}[d]^\tau & \mathcal O_X \ar[r] \ar@{^(->}[d] & 0 \\
0 \ar[r] & \mathcal G(H) \ar[r] & \mathcal E(H) \ar[r] & \mathcal L(H) \ar[r] & 0.
}
$$
Since $\mathcal G(H)$ and $\mathcal O_X$ are nef, 
we see that $\mathcal F$ is also nef. 
By the generic surjectivity of $\tau$, 
we see that $\mathcal E(H)$ is pseudo-effective. 

Next, we prove (2). 
Let $H$ be an ample Cartier divisor on $X$. 
Take $m\in\mathbb Z_{>0}$ such that $\mathrm{char}(k)\nmid m$, $S^m(\mathcal G)(-H)$ is nef, and $\mathcal L^m(-H)$ is pseudo-effective.
By \cite[Theorem~4.1.10]{Laz04I}, there are a surjective finite morphism 
$\pi:X'\to X$ from a normal projective variety $X'$ 
and an ample Cartier divisor $H'$ on $X'$ such that $mH'\sim \pi^*H$. 
Consider the exact sequence 
$$
0\to \pi^*\mathcal G(-H') \to \pi^*\mathcal E(-H') 
\to \pi^*\mathcal L(-H') \to 0. 
$$
Since $S^m(\pi^*\mathcal G(-H')) \cong \pi^*\big(S^m(\mathcal G)(-H)\big)$ 
is nef, so is $\pi^*\mathcal G(-H')$. 
Also, $\pi^*\mathcal L(-H')$ is pseudo-effective, 
so we see that 
$\pi^*\mathcal E(-H')$ is pseudo-effective by (1). 
Then 
\begin{align*}
S^{(m+1)\beta}(\pi^*\mathcal E(-H'))(\beta H')
& \cong S^{(m+1)\beta}(\pi^*\mathcal E) (-m\beta H')
\\ & \cong S^{(m+1)\beta}(\pi^*\mathcal E) (-\beta \pi^*H)
\cong \pi^*\left( S^{(m+1)\beta}(\mathcal E)(-\beta H) \right)
\end{align*}
is generically globally generated for some $\beta \in \mathbb Z_{>0}$, 
so $S^{(m+1)\beta}(\mathcal E)(-\beta H)$ 
is pseudo-effective by \cite[Lemma~2.4~(2)]{EG19}, 
which means that $\mathcal E$ is big. 
\end{proof}
If the following question is answered affirmatively, 
then we can generalize Theorem~\ref{thm:nef-psef} to the case of higher rank 
by an argument similar to that of the above proof. 
\begin{ques}
Let $\mathcal E$ be a \textit{big} vector bundle 
on a normal projective variety $X$ over an algebraically closed field. 
Does there exist a surjective finite morphism $\pi:X'\to X$ from 
a normal projective variety $X'$ such that 
$\pi^*\mathcal E$ is generically globally generated? 
\end{ques}
For example, one can easily check that the question holds affirmatively 
if $\mathcal E$ is a direct sum of big line bundles. 
\section{Proof of Theorem~\ref{thm:negative}}
Set $X:=\mathbb P(\mathcal O_{\mathbb P^1} \oplus \mathcal O_{\mathbb P^1}(-2))$. 
Let $f:X\to \mathbb P^1$ be the projection. 
Let $C\subset X$ be the section of $f$ corresponding to the quotient $\mathcal O_{\mathbb P^1} \oplus \mathcal O_{\mathbb P^1}(-2) \twoheadrightarrow \mathcal O_{\mathbb P^1}(-2)$. 
Then $\mathcal O_X(1) \cong \mathcal O_X(C)$. 
We define the  divisor $H$ on $X$ as $$H:=C +3f^* [y],$$ where $y \in \mathbb P^1$ is a closed point. 
Then we see from \cite[V, Theorem~2.17]{Har77} that $H$ is very ample. 

Since 
$
f_*\mathcal O_X(C) \cong \mathcal O_{\mathbb P^1} \oplus \mathcal O_{\mathbb P^1}(-2), 
$
we have $H^1(\mathbb P^1, f_*\mathcal O_X(C)) \cong k$. 
Thus, from the Leray spectral sequence, we obtain 
$H^1(X, \mathcal O_X(C))\cong k.$
Take $0\ne \xi \in \mathrm{Ext}^1(\mathcal O_X, \mathcal O_X(C))$.
Let 
\begin{align} 
\label{align:xi}\tag{$\flat$}
0\to \mathcal O_X(C) \to \mathcal E \to \mathcal O_X \to 0
\end{align}
be the exact sequence corresponding to $\xi$. 
Since $H^1(X,\mathcal O_X)=0$, we see that the natural morphism 
$$
H^1(X, \mathcal O_X(C))
\to 
H^1(C, \mathcal O_C(C))
$$
is injective, so the exact sequence 
$$
0 \to \mathcal O_C(C) \to \mathcal E|_C \to \mathcal O_C \to 0
$$
does not split. Since $\mathcal O_C(C) \cong \mathcal O_{\mathbb P^1}(-2)$, 
we see that 
$
\mathcal E|_C \cong \mathcal O_{\mathbb P^1}(-1)^{\oplus 2}.
$ 

From now on, we divide the proof into the case of $\mathrm{char}(k)=0$ 
and the case of $\mathrm{char}(k)=p>0$. 
\medskip 
\\ \noindent \underline{Case of $\mathrm{char}(k)=0$}.  
By \cite[Theorem~4.1.10]{Laz04I}, there is a surjective finite morphism 
$\pi:X'\to X$ from a smooth projective surface 
and an ample Cartier divisor $H'$ on $X$ such that $\pi^*H \sim 4H'$. 
Put $\mathcal G:= \pi^*\mathcal E (H')$. 
Taking the pullback of $($\ref{align:xi}$)$ and the tensor product with $\mathcal O_{X'}(H')$, 
we obtain the exact sequence 
$$
0\to \mathcal O_{X'}(\pi^*C +H') \to \mathcal G \to \mathcal O_{X'}(H') \to 0. 
$$
We prove that $\mathcal G$ is not pseudo-effective. 
From 
$$
S^4(\mathcal G) 
\cong S^4(\pi^*\mathcal E) (4H')
\cong S^4(\pi^*\mathcal E) (\pi^*H)
\cong \pi^*\left( S^4(\mathcal E) (H) \right),
$$ 
it is enough to show that $\mathcal F:=S^4(\mathcal E) (H)$ is not pseudo-effective by \cite[Corollary~2.20]{Vie95} and \cite[Lemma~2.4~(2)]{EG19}.
For this purpose, we check that 
$$
S^{4\beta}(\mathcal F)(\beta H)
\cong S^{4\beta}\left(S^4(\mathcal E)\right)(5\beta H)
$$ 
is not generically globally generated for each $\beta \in \mathbb Z_{>0}$. 
By $($\ref{align:xi}$)$, we have the following surjective morphism 
$$
\sigma_\beta:
S^{4\beta}\left(S^4(\mathcal E)\right)(5\beta H)
\twoheadrightarrow
\mathcal O_X(5\beta H). 
$$
Let us consider the following commutative diagram: 
\begin{align*} 
\xymatrix{
S^{4\beta}\left(S^4(\mathcal E)\right)(15 \beta f^*[y]) 
\ar@{^(->}[r]^-{\tau_\beta} 
\ar@{->>}[d]_-{\lambda_\beta} 
&
S^{4\beta}\left(S^4(\mathcal E)\right) (5\beta H) \ar@{->>}[d]^-{\sigma_\beta}
\\ 
\mathcal O_X(15\beta f^*[y]) \ar@{^(->}[r]
&
\mathcal O_X(5\beta H). 
}
\end{align*}
Here, the horizontal arrows are induced from the morphism 
$
\mathcal O_X(-5\beta C) \hookrightarrow \mathcal O_X. 
$
In order to prove that $S^{4\beta}\left(S^4(\mathcal E)\right)(5\beta H)$ is not generically globally generated, it is enough to see that $H^0(\sigma_\beta)$ is the zero-map. 
For this purpose, it is sufficient to prove $H^0(\tau_\beta)$ is bijective and 
$H^0(\lambda_\beta)$ is the zero-map.
\begin{cl} \label{cl:3}
$H^0(\tau_\beta)$ is bijective. 
\end{cl}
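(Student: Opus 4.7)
The plan is to realize $\tau_\beta$ as a sheaf-theoretic inclusion whose cokernel is supported on a thickening of $C$, and then reduce the bijectivity of $H^0(\tau_\beta)$ to a simple degree computation on $C \cong \mathbb{P}^1$. Since $5\beta H = 5\beta C + 15\beta f^*[y]$, the map $\tau_\beta$ is obtained from the inclusion $\mathcal O_X \hookrightarrow \mathcal O_X(5\beta C)$ by tensoring with the locally free sheaf $\mathcal W := S^{4\beta}(S^4(\mathcal E))(15\beta f^*[y])$. In particular, $\tau_\beta$ is injective as a morphism of sheaves, so $H^0(\tau_\beta)$ is automatically injective; to obtain bijectivity it suffices to show that $H^0(X, \mathcal Q_\beta) = 0$, where $\mathcal Q_\beta := \mathrm{coker}(\tau_\beta)$. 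Tensoring $0 \to \mathcal O_X \to \mathcal O_X(5\beta C) \to \mathcal O_{5\beta C}(5\beta C) \to 0$ by $\mathcal W$ identifies $\mathcal Q_\beta$ with $S^{4\beta}(S^4(\mathcal E))(5\beta H)\big|_{5\beta C}$.

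To analyze $\mathcal Q_\beta$, I would filter $\mathcal O_{5\beta C}$ by powers of the ideal sheaf $\mathcal I_C \subset \mathcal O_X$; the graded pieces are $\mathcal I_C^n / \mathcal I_C^{n+1} \cong \mathcal O_C(-nC)$ for $n = 0, 1, \ldots, 5\beta - 1$. After tensoring with $S^{4\beta}(S^4(\mathcal E))(5\beta H)$, these become sheaves on $C$ of the form
$$
\mathcal G_n := S^{4\beta}(S^4(\mathcal E))\big|_C \otimes \mathcal O_C\bigl((5\beta - n)C + 15\beta f^*[y]\bigr).
$$
A straightforward induction on $5\beta$ using the long exact cohomology sequences attached to $0 \to \mathcal O_C(-nC) \to \mathcal O_{(n+1)C} \to \mathcal O_{nC} \to 0$ (twisted suitably) then reduces $H^0(X, \mathcal Q_\beta) = 0$ to showing $H^0(C, \mathcal G_n) = 0$ for every $n \in \{0, 1, \ldots, 5\beta - 1\}$.

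For this last step I would invoke the isomorphism $\mathcal E|_C \cong \mathcal O_{\mathbb P^1}(-1)^{\oplus 2}$ established immediately before the claim. This yields $S^4(\mathcal E)|_C \cong \mathcal O_{\mathbb P^1}(-4)^{\oplus 5}$, and hence $S^{4\beta}(S^4(\mathcal E))|_C$ is a direct sum of copies of $\mathcal O_{\mathbb P^1}(-16\beta)$. Combined with $C^2 = -2$ and $C \cdot f^*[y] = 1$, each summand of $\mathcal G_n$ has degree on $C$ equal to
$$
-16\beta + (5\beta - n)(-2) + 15\beta = -11\beta + 2n,
$$
which for $\beta \geq 1$ and $0 \leq n \leq 5\beta - 1$ is at most $-\beta - 2 < 0$. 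Therefore $H^0(C, \mathcal G_n) = 0$ for every $n$, giving the claim. The only step requiring any care is the identification of $\mathcal Q_\beta$ and its filtration; the numerical verification is then routine.
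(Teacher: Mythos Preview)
Your proposal is correct and takes essentially the same approach as the paper: both peel off one copy of $C$ at a time from $5\beta C$ and reduce to the vanishing of $H^0\bigl(C, S^{4\beta}(S^4(\mathcal E))(lC + 15\beta f^*[y])|_C\bigr)$ via the identical degree computation $-16\beta - 2l + 15\beta < 0$. The only cosmetic difference is packaging---the paper writes this as an explicit induction on $l$ from $0$ to $5\beta$, while you phrase it as a filtration of the cokernel sheaf $\mathcal Q_\beta$ by powers of $\mathcal I_C$; your index $n$ and the paper's $l$ are related by $l = 5\beta - n$.
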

\begin{proof}[Proof of Claim \ref{cl:3}]
 Taking the tensor product of 
$$
0 \to \mathcal O_X(-C) \to \mathcal O_X \to \mathcal O_C \to 0
$$
and $S^{4\beta}\left(S^4(\mathcal E)\right)\left( lC +15\beta f^*[y]\right)$ for $l\in\mathbb Z_{\ge 0}$, 
we obtain the following exact sequence: 
\begin{align*}
0 & \to H^0\left(X, S^{4\beta}\left(S^4(\mathcal E)\right) \left((l-1)C +15\beta f^*[y]\right) \right)
\\ & \to H^0\left(X, S^{4\beta}\left(S^4(\mathcal E)\right) \left(lC +15\beta f^*[y]\right) \right)
\to H^0\left(C, S^{4\beta}\left(S^4(\mathcal E)\right) \left(lC +15\beta f^*[y]\right) |_C \right). 
\end{align*}
Since $\mathcal O_C(C) \cong \mathcal O_{\mathbb P^1}(-2)$ and 
$\mathcal E|_C \cong \mathcal O_{\mathbb P^1}(-1)^{\oplus 2}$, 
we have 
\begin{align*}
H^0\left(C, S^{4\beta}\left(S^4(\mathcal E)\right) \left(lC +15 \beta f^*[y]\right) |_C\right)
& \cong  \bigoplus H^0\left(\mathbb P^1, \mathcal O_{\mathbb P^1}(-16\beta -2l +15\beta) \right)
\\ & = \bigoplus H^0\left(\mathbb P^1, \mathcal O_{\mathbb P^1}(-\beta -2l) \right)
=0. 
\end{align*}
From  $H^0\left(X, S^{4\beta}\left(S^4(\mathcal E)\right) (5\beta H) \right) = H^0\left(X, S^{4\beta}\left(S^4(\mathcal E)\right) (5\beta C  + 15 \beta f^*[y] )\right) $, our claim follows.
\end{proof}
\begin{cl} \label{cl:4}
$H^0(\lambda_\beta)$ is the zero-map.	
\end{cl}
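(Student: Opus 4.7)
The plan is to mirror the restrict-to-$C$ strategy used in Claim~\ref{cl:3}: I will show that any section $s \in H^0\bigl(X, S^{4\beta}(S^4\mathcal E)(15\beta f^*[y])\bigr)$ has the property that $\lambda_\beta(s)$ vanishes after restriction to $C$, and since sections of $\mathcal O_X(15\beta f^*[y])$ are determined by their restriction to $C$, this forces $\lambda_\beta(s) = 0$.

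Concretely, I would form the commutative square obtained by restricting every sheaf to $C$:
\[
\xymatrix{
H^0\bigl(X,\, S^{4\beta}(S^4\mathcal E)(15\beta f^*[y])\bigr) \ar[r]^-{H^0(\lambda_\beta)} \ar[d]
& H^0\bigl(X,\, \mathcal O_X(15\beta f^*[y])\bigr) \ar[d]^-{\rho} \\
H^0\bigl(C,\, S^{4\beta}(S^4\mathcal E|_C)(15\beta f^*[y]|_C)\bigr) \ar[r]
& H^0\bigl(C,\, \mathcal O_C(15\beta f^*[y]|_C)\bigr).
}
\]
The two ingredients I need are: (a) the lower-left corner vanishes, and (b) the right vertical map $\rho$ is injective. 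Combined they force $H^0(\lambda_\beta) = 0$ by a one-line diagram chase.

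For (a), since $\mathcal E|_C \cong \mathcal O_{\mathbb P^1}(-1)^{\oplus 2}$ and $f|_C \colon C \to \mathbb P^1$ is an isomorphism, $f^*[y]|_C$ has degree $1$ on $C \cong \mathbb P^1$, so $S^{4\beta}(S^4\mathcal E|_C)(15\beta f^*[y]|_C)$ is a direct sum of copies of $\mathcal O_{\mathbb P^1}(-16\beta+15\beta) = \mathcal O_{\mathbb P^1}(-\beta)$, which has no global sections for $\beta \in \mathbb Z_{>0}$. For (b), I would invoke the short exact sequence
\[
0 \to \mathcal O_X\bigl(-C+15\beta f^*[y]\bigr) \to \mathcal O_X\bigl(15\beta f^*[y]\bigr) \to \mathcal O_C\bigl(15\beta f^*[y]|_C\bigr) \to 0
\]
and push it forward by $f$: the projection formula together with $f_*\mathcal O_X(-C) = f_*\mathcal O_X(-1) = 0$ for the $\mathbb P^1$-bundle $X \to \mathbb P^1$ gives $f_*\mathcal O_X(-C+15\beta f^*[y]) = 0$, hence $H^0\bigl(X, \mathcal O_X(-C+15\beta f^*[y])\bigr) = 0$, which yields the required injectivity of $\rho$.

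I do not anticipate a serious obstacle here: both (a) and (b) rely only on the explicit ruled-surface geometry of $X = \mathbb P(\mathcal O_{\mathbb P^1} \oplus \mathcal O_{\mathbb P^1}(-2))$ already exploited in Claim~\ref{cl:3}, and the only care needed is in correctly identifying $\mathcal E|_C$ and the divisor $f^*[y]|_C$ on the section $C$.
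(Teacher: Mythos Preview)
Your argument is correct and is essentially the same as the paper's: both form the restriction-to-$C$ square, use the vanishing $H^0\bigl(C, S^{4\beta}(S^4\mathcal E)(15\beta f^*[y])|_C\bigr)=0$, and conclude via injectivity of the restriction $H^0(X,\mathcal O_X(15\beta f^*[y]))\to H^0(C,\mathcal O_C(15\beta f^*[y]))$. The only cosmetic difference is that the paper justifies this last injectivity (in fact bijectivity) in one line by noting that $C$ is a section of $f$, whereas you deduce it from $f_*\mathcal O_X(-C)=0$; both are valid.
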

\begin{proof}[Proof of Claim \ref{cl:4}]
Consider the following commutative diagram: 
$$
\xymatrix{
H^0\left(X, S^{4\beta}\left(S^4(\mathcal E)\right) (15\beta f^*[y]) \right) 
\ar[r] \ar[d]_{H^0(\lambda_\beta)} 
& H^0\left(C, S^{4\beta}\left(S^4(\mathcal E)\right) (15\beta f^*[y]) |_C \right) \ar[d] 
\\ H^0\left(X, \mathcal O_X(15\beta f^*[y]) \right) \ar[r] 
& H^0\left(C, \mathcal O_C(15\beta f^*[y]) \right). 
}
$$
The bottom horizontal arrow is bijective, since $C$ is a section of $f$. 
Hence, our claim follows from 
$$
H^0\left(C, S^{4\beta}(S^4(\mathcal E))(15\beta f^*[y]) |_C \right) 
\cong 
\bigoplus H^0\left(\mathbb P^1, \mathcal O_{\mathbb P^1}(-16\beta +15\beta) \right)
=0.
$$
\end{proof}
We put $S := X'$, $\mathcal V := \mathcal G$, $\mathcal L := \mathcal O_{X'}(\pi^*C +H') $, and $\mathcal M :=  \mathcal O_{X'}(H')$. 
Then they satisfy all the desired properties.

\medskip 
\noindent \underline{Case of $\mathrm{char}(k)=p>0$}.  
Set $e:=1$ (resp.~$e:=2$) if $p\ge 5$ (resp.~$p<5$). 
Then $p^e\ge 4$. 
Put $\mathcal G:=({F^e}^*\mathcal E)(H)$, 
where $F$ is the absolute Frobenius morphism of $X$. 
Taking the pullback of  $($\ref{align:xi}$)$ by ${F^e}$ and  the tensor product with $\mathcal O_X(H)$, 
we obtain the exact sequence 
$$
0\to \mathcal O_X(p^{e}C+H) \to \mathcal G \to \mathcal O_X(H) \to 0. 
$$
We prove that $\mathcal G$ is not pseudo-effective. 
For this purpose, we check that 
$$
S^{4\beta}(\mathcal G)(\beta H)
\cong 
S^{4\beta}({F^e}^*\mathcal E)(5\beta H)
$$
is not generically globally generated for each $\beta\in\mathbb Z_{>0}$. 
We have the following surjective morphism 
$$
s_\beta: S^{4\beta}({F^e}^*\mathcal E)(5\beta H)
\twoheadrightarrow \mathcal O_X(5\beta H).
$$
Thus, it is enough to check that $H^0(s_\beta)$ is the zero-map. 
For each $l\in\mathbb Z_{\ge 0}$, we have 
\begin{align*}
H^0\left(C, S^{4\beta}({F^e}^*\mathcal E)(lC+15\beta f^*[y]) |_C \right)
&\cong  \bigoplus H^0\left(\mathbb P^1, \mathcal O_{\mathbb P^1}(-4\beta p^e -2l +15\beta) \right)
\\ & \cong  \bigoplus H^0\left(\mathbb P^1, \mathcal O_{\mathbb P^1}\left ((15-4p^e)\beta -2l \right) \right)
 =0. 
\end{align*}
Note that $15-4p^e\le 15-16=-1$. 
Hence, we can prove $H^0(s_\beta)=0$ by an argument similar to that of $H^0(\sigma_\beta)=0$ as in the  $\mathrm{char}(k)=0$ case. 

We put $S := X$, $\mathcal V := \mathcal G$, $\mathcal L := \mathcal O_X(p^{e}C+H) $, and $\mathcal M :=  \mathcal O_X(H)$. 
Then they satisfy all the desired properties.
\qed
\begin{rem}
The vector bundle $\mathcal E$ in $($\ref{align:xi}$)$ is a simple example of an almost nef but not pseudo-effective vector  bundle. That $\mathcal E$ is not pseudo-effective is proved implicitly in the proof above, but can also be proved directly.
By $($\ref{align:xi}$)$, we get the surjective morphism  
$$
t_\beta: S^{4\beta}(\mathcal E)(\beta H) 
\twoheadrightarrow \mathcal O_X(\beta H)
$$
for each $\beta \in \mathbb Z_{>0}$. 
We can prove $H^0(t_\beta)=0$ 
by using the commutative diagram 
\begin{align*} 
\xymatrix{
S^{4\beta}(\mathcal E)(3 \beta f^*[y]) 
\ar@{^(->}[r]
\ar@{->>}[d]
&
S^{4\beta}(\mathcal E)(\beta H) \ar@{->>}[d]^-{t_\beta}
\\ 
\mathcal O_X(3\beta f^*[y]) \ar@{^(->}[r]
&
\mathcal O_X(\beta H),  
}
\end{align*}
where the horizontal arrows are induced from the morphism $\mathcal O_X(-\beta C) \hookrightarrow \mathcal O_X$, 
and a vanishing as in Claim~\ref{cl:3}, that is, 
$$
H^0\left(C, S^{4\beta}(\mathcal E)(lC+3\beta f^*[y]) |_C\right) 
=\bigoplus H^0(\mathbb P^1, \mathcal O_{\mathbb P^1}(-4\beta -2l +3\beta))
=0 
$$
for each $l\in\mathbb Z_{\ge 0}$. 
\end{rem}
\begin{rem}
In positive characteristic, we do not know whether the
pseudo-effectivity of $\mathcal E$ implies that of $S^m(\mathcal E)$,
so we choose to separate the proof into the case of $\mathrm{char}(k)=0$ and the
case of $\mathrm{char}(k)>0$.
\end{rem}
\bibliographystyle{abbrv}
\bibliography{ref.bib}
\end{document}